\documentclass{amsart}
\usepackage{amssymb, bm, amsmath, latexsym, mathabx, dsfont,tikz}
\usepackage{multirow}
\usepackage{enumitem}

\linespread{1.3}
\renewcommand{\baselinestretch}{\baselinestretch}
\renewcommand{\baselinestretch}{1.1}
\numberwithin{equation}{section}

\newtheorem{thm}{Theorem}[section]
\newtheorem{lem}[thm]{Lemma}

\theoremstyle{definition}

\theoremstyle{remark}
\newtheorem{rmk}[thm]{Remark}

\numberwithin{equation}{section}

\newcommand{\ra}{{\rightarrow}}
\newcommand{\nra}{{\nrightarrow}}
\newcommand{\gen}{\text{gen}}

\newcommand{\z}{{\mathbb Z}}
\newcommand{\nn}{{\mathbb{N}_0}}
\newcommand{\q}{{\mathbb Q}}

\newcommand{\Mod}[1]{\ (\mathrm{mod}\ #1)}

\newcommand{\df}[1]{\langle #1 \rangle}

\newcommand{\ppp}[2][m]{{P_{#1,(#2)}}}
\newcommand{\pp}[3][m]{{P_{#1,(1^{#2},2^{#3})}}}

\newenvironment{newenum}
{\begin{enumerate}[label={\rm(\arabic*)}]}
	{\end{enumerate}}

\begin{document}

% Remove or comment out any unused author tags.
% author one information

\author{Daejun Kim}
\address{Research Institute of Mathematics, Seoul National University,
 Seoul 08826, Korea}
\email{goodkdj@snu.ac.kr}

%\thanks{This work was supported by Korea Research Foundation
%Grant(KRF-2003-041-C00001)}

% Use this \subjclass if you are using amsart version 2.0 (December 1999).
%\subjclass[2000]{11E12, 11E20}
% Use this one if you are using an older version of amsart.
\subjclass[2010]{11E12, 11E25, 11E08}
%\date{}

\keywords{Generalized polygonal numbers, Diophantine equations}

\thanks{This work was supported by the National Research Foundation of Korea(NRF) (NRF-2019R1A2C1086347) and by Basic Science Research Program through NRF funded by the Minister of Education (NRF-2020R1A6A3A03037816).}

% at present the "communicated by" line appears only in ERA, PROC and JAG
%\commby{}

\title[Weighted Sums of Generalized Polygonal numbers]{Weighted Sums of Generalized Polygonal Numbers with Coefficients $1$ or $2$}

\begin{abstract} 
	In this article, we consider weighted sums of generalized polygonal numbers with coefficients $1$ or $2$. We show that for any $m\ge10$, a weighted sum of generalized $m$-gonal numbers represents every non-negative integer if it represents $1$, $m-4$, and $m-2$. Furthermore, we study representations of sums of four generalized polygonal numbers with coefficients $1$ or $2$.
\end{abstract}

\maketitle

\section{Introduction}

For any positive integer $m\ge3$, the {\em $m$-gonal numbers} are the integers of the form
$$P_m(x) = (m-2) \cdot \left(\frac{x^2-x}{2} \right) + x, \text{ for }x \in \nn:=\mathbb{N}\cup \{0\}.$$
In 1638, Fermat claimed that every non-negative integer is written as the sum of $m$ $m$-gonal numbers, that is, there exists an $\bm{x}=(x_1,\ldots,x_m)\in\nn^m$ such that 
$$
\sum_{i=1}^m  P_m(x_i) = N
$$
for any $N\in\nn$.
Later, in 1770, Lagrange proved the four square theorem, which is exactly the case when $m=4$ of Fermat's assertion. In 1796, Gauss proved, so called, the Eureka Theorem, which is the case when $m=3$, and finally, Cauchy proved the general case $m\ge5$ in 1815. Nathanson (\cite{N1} and \cite[pp.3--33]{N2}) simplified and provided the proof of slightly stronger version of Cauchy's theorem. The Fermat's polygonal number theorem was generalized in many directions.

In 1830, Legendre refined the Fermat's polygonal number theorem and proved that any integer $N\ge 28(m-2)^3$ with $m\ge5$ is written as
$$
P_m(x_1)+P_m(x_2)+P_m(x_3)+P_m(x_4)+\delta_m(N),
$$
where $x_1,x_2,x_3,x_4\in\nn$, $\delta_m(N)=0$ if $m$ is odd, and $\delta_m(N)\in\{0,1\}$ if $m$ is even.
Nathanson \cite[p.33]{N2} simplified the proofs of Legendre's theorem.
Recently, Meng and Sun \cite{MS} strengthened Legendre's theorem by showing that if $m\equiv2\Mod{4}$ with $m\ge5$, then any integer $N\ge 28(m-2)^2$ can be written as the above with $\delta_m(N)=0$, while if $m\equiv 0\Mod{4}$ with $m\ge5$, there are infinitely many positive integers not of the form $P_m(x_1)+P_m(x_2)+P_m(x_3)+P_m(x_4)$ with $x_1,x_2,x_3,x_4\in\nn$.

On the other hand, Guy \cite{G} considered Fermat's polygonal number theorem for more general numbers $P_m(x)$ with $x\in\z$, which are called {\em generalized $m$-gonal numbers}. 
For a positive integer $m\ge 3$, $\bm{a}=(a_1,\ldots,a_k)\in\mathbb{N}^k$, and $\bm{x}=(x_1,\ldots,x_k)\in \z^k$, we define the sum 
\begin{equation}\label{polysum}
	P_{m,\bm{a}}(\bm{x}):=\sum_{i=1}^k a_iP_m(x_i).
\end{equation}
We say the sum $P_{m,\bm{a}}$ {\em represents} an integer $N$ if $P_{m,\bm{a}}(\bm{x})=N$ has an integer solution $\bm{x}\in\z^k$, and we write $N\ra P_{m,\bm{a}}$. The sum $P_{m,\bm{a}}$ is called {\em universal} if it represents every non-negative integer.
Guy \cite{G} asked for which $k\in\mathbb{N}$, the equation
$$
\sum_{i=1}^k P_m(x_i)=N
$$
has an integer solution $x_1,\ldots,x_k\in\z$ for any $N\in\nn$, that is, what is the minimal number $k_m$ such that the sum $P_{m,(1,\ldots,1)}$ ($1$ is repeated $k_m$ times) is universal. He explained that $k_m=3$ for $m\in\{3,5,6\}$ and $k_4=4$, and showed that $k_m\ge m-4$ for $m\ge8$, using the simple observation that the smallest generalized $m$-gonal number other than $0$ and $1$ is $m-3$.

Later, Sun \cite{S} proved that $\ppp[8]{1,1,1,1}$ is universal, which implies $k_8=4$, and also explained in the introduction that $k_7=4$. Indeed, note that $\ppp[7]{1,1,1}$ cannot represent $10$, and one may show that $\ppp[7]{1,1,1,1}$ is universal; thanks to the Legendre's theorem, one need only to check that any integers less than $3500=28(7-2)^3$ are represented by $\ppp[7]{1,1,1,1}$. In the same manner, one may verify that $k_9=5$.
Recently in \cite{BBKKPSSV}, it is shown that $k_m=m-4$ for $m\ge10$  (see the proof of Theorem \ref{mainthm} for another proof).
Therefore, the value $k_m$ is determined for any integer $m \ge 3$.

On the other hand, Kane and his collaborators \cite{BBKKPSSV} considered the specific case when
$$
\bm{a}=\bm{a}_{r,r-1,k}=(1,\ldots,1,r,\ldots,r),
$$
where $1$ is repeated $r-1$ times and $r$ is repeated $k-r+1$ times, and determined the minimal number $k$, say $k_{m,r,r-1}$, such that $P_{m,\bm{a}_{r,r-1,k}}$ is universal. In particular, they proved that $k_{m,2,1}=\lfloor \frac{m}{2} \rfloor$ for any $m\ge14$.

Motivated by this, in this article, we study the representations of the sum \eqref{polysum} with coefficients $1$ or $2$. For the sake of simplicity, for any non-negative integers $\alpha$ and $\beta$, we denote the vector
$$
(1^\alpha,2^\beta)=(\overbrace{1,\ldots,1}^{\alpha\text{-times}},\overbrace{2,\ldots,2}^{\beta\text{-times}}),
$$
where $1$ is repeated $\alpha$ times, and $2$ is repeated $\beta$ times.
The following theorem is the main result of this paper.

\begin{thm}\label{univcriterion}
	For any positive integer $m\ge10$, the sum $\pp{\alpha}{\beta}$ is universal if and only if it represents 
	$$
	1,\, m-4,\text{ and } m-2.
	$$
	Moreover, the sum $\pp{\alpha}{\beta}$ is universal if and only if it represents 
	$$
	1,3,5,10,19, \text{ and } 23 \text{ if } m=7, \text{ and}\quad 
	1,5,7, \text{ and } 34 \text{ if } m=9.
	$$
	
\end{thm}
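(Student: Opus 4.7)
The ``only if'' direction is immediate from the definition of universality. For ``if'' when $m \geq 10$, my plan is to first translate the three hypothesized representations into arithmetic inequalities on $(\alpha, \beta)$. The positive generalized $m$-gonal numbers begin $1, m-3, m, 3m-8, \ldots$ and coefficient-two summands are always even; thus representing $1$ forces $\alpha \geq 1$, since only a single coefficient-one slot with $P_m(\pm 1) = 1$ can contribute $1$. For $m-4 < m-3$, the admissible summands shrink to $\{0,1\}$ (coefficient one) and $\{0,2\}$ (coefficient two), so $\alpha + 2\beta \geq m-4$. Representing $m-2$ either uses one $P_m(-1) = m-3$ together with a residual $1$, forcing $\alpha \geq 2$, or stays below $m-3$ and yields $\alpha + 2\beta \geq m-2$.

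By monotonicity of universality, it suffices to verify universality for the minimal pairs $(\alpha, \beta)$ satisfying these constraints, namely $(1, \lceil(m-3)/2\rceil)$ together with $(\alpha, \lceil(m-4-\alpha)/2\rceil)$ for $2 \leq \alpha \leq m-4$. For $\alpha = m-4$ (so $\beta = 0$), universality of $\ppp{1^{m-4}}$ is exactly the result $k_m = m-4$ from \cite{BBKKPSSV}. For $\alpha = 1$ and $m \geq 14$, universality of $\pp{1}{\lfloor m/2\rfloor - 1}$ is the result $k_{m,2,1} = \lfloor m/2\rfloor$ from the same paper; the residual small cases $m \in \{10, 11, 12, 13\}$ with $\alpha = 1$ can be handled by a direct finite verification, since Legendre's theorem bounds the range of $N$ to be checked. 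The intermediate band $2 \leq \alpha \leq m-5$ is more delicate and, in my view, is the main obstacle.

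For that intermediate band, a natural tool is the identity $8(m-2)P_m(x) + (m-4)^2 = (2(m-2)x - (m-4))^2$, which converts the representation problem for $\pp{\alpha}{\beta}$ into that of representing the shifted integer $8(m-2)N + (\alpha + 2\beta)(m-4)^2$ by the diagonal quadratic form $\sum_{i=1}^\alpha X_i^2 + 2\sum_{j=1}^\beta Y_j^2$ subject to the congruence $X_i, Y_j \equiv 4-m \pmod{2(m-2)}$; one can then attempt to invoke local-global or escalator arguments for such congruence-restricted quadratic forms, or bootstrap from the two boundary endpoints by replacing a $P_m(-1)$ summand on the coefficient-one side with an equivalent combination of $P_m(1)$'s on both sides. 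For the exceptional $m = 7$ and $m = 9$, the same overall strategy applies, but since the gap between $1$ and $m-3$ is much smaller (only $4$ and $6$), three test values no longer detect every obstruction; the expanded lists $\{1,3,5,10,19,23\}$ and $\{1,5,7,34\}$ must arise from enumerating which minimal pairs $(\alpha,\beta)$ fail to represent which specific small integers, reflecting the richer inventory of small generalized $m$-gonal numbers in these smaller-$m$ regimes.
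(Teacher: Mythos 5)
Your necessary conditions are correct and match the paper's Lemma \ref{lemneccond}: representing $1$, $m-4$, $m-2$ forces $\alpha\ge1$, $\alpha+2\beta\ge m-4$, and ($\alpha\ge2$ or $\alpha+2\beta\ge m-2$), and monotonicity reduces the problem to the minimal pairs. But from there the proposal has a genuine gap, and it also misdiagnoses where the difficulty lies. The ``intermediate band'' $2\le\alpha\le m-5$, which you call the main obstacle and for which you only offer a hope (``attempt to invoke local--global or escalator arguments'' for a congruence-restricted form), is in fact trivial: since $2P_m(x)=P_m(x)+P_m(x)$, universality of $\pp{\alpha}{\beta}$ implies universality of $\pp{\alpha+2\beta'}{\beta-\beta'}$ (the paper's Lemma \ref{lemneccond} \ref{lemneccond:2}), so the whole band follows from the few sums with $\alpha\in\{2,3\}$ and $\beta$ near $\lfloor m/2\rfloor-3$. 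The genuinely hard content, which your proposal does not supply, is proving universality of precisely those near-boundary sums $\pp{2}{\lceil (m-6)/2\rceil}$, $\pp{3}{(m-7)/2}$ (odd $m$), and $\pp{1}{\lfloor m/2\rfloor-1}$ for $10\le m\le 13$. The paper handles $m\ge19$ by a new lemma (Lemma \ref{lem1222(m-2)N}): restricting $\ppp{1,2,2,2}$ to the hyperplane $x_1+2x_2+2x_3+2x_4=0$ yields a class-number-one ternary lattice representing every $(m-2)N$ with $N\neq 2^{2s}(8t+1)$, which is combined with explicit complete residue systems modulo $m-2$ represented by the remaining coefficient-$2$ part; and it handles $9\le m\le 18$ via an effective almost-universality theorem for the quaternary sums (Theorem \ref{almostuniv}, proved by lattice-theoretic arguments in Section \ref{quaternarysum}) that makes a finite computer check legitimate.

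Your substitute justification for the finite checks---``Legendre's theorem bounds the range of $N$ to be checked''---does not work: Legendre's theorem concerns $P_m(x_1)+\cdots+P_m(x_4)+\delta_m(N)$ with four unit coefficients and $x_i\in\nn$, whereas the sums at issue (e.g.\ $\pp[10]{1}{4}$, $\pp{2}{2}$, $\pp{1}{3}$) contain at most two unit coefficients, so no quaternary subsum of the required shape is available; moreover for $m\equiv0\Mod{4}$ even $\ppp{1,1,2,2}$ misses infinitely many integers over $\z$ (Theorem \ref{almostuniv} \ref{almostuniv:2}), so some effective result for $\ppp{1,2,2,2}$ or $\ppp{1,1,1,2}$, like the paper's Theorem \ref{almostuniv} \ref{almostuniv:1}, is unavoidable. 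Finally, the $m=7$ and $m=9$ criteria are asserted rather than proved: the paper derives them by showing that representing $1,3,5$ (resp.\ $1,5,7$) forces the sum to contain one of a short list of subsums, computing exception sets such as $E(\ppp[9]{1,2,2,2})=\{34\}$, and verifying via Theorem \ref{almostuniv} that the four quaternary sums are universal for $m=7$; none of this is carried out in your sketch.
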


Note that Theorem \ref{univcriterion} is complete in the sense that for each $m=3,4,5,6,$ and $8$, there is a criterion for determining the universality of an arbitrary sum $P_{m,\bm{a}}$  (see Remark \ref{rmk} \ref{rmk:1}). 
On the other hand, Theorem \ref{univcriterion} will be proved by using Lemma \ref{lemneccond} and Theorem \ref{mainthm}. 
When we prove Theorem \ref{mainthm}, Lemma \ref{lem1222(m-2)N} will be systematically applied for the case when $m\ge 19$, however, the same strategy does not work for $m\le 18$.
Moreover, neither Lemma \ref{lemneccond} nor Theorem \ref{mainthm} consider the case when $m=7$.
Therefore, in order to deal with the cases for those small positive integers, we need the following theorem, which is analogous to that of Legendre.

\begin{thm}\label{almostuniv}
	Let $m\ge 5$ and $N$ be integers. Let $\bm{a}$ be one of the vectors in 
	$$
	\{(1,1,1,1), (1,1,1,2), (1,1,2,2),(1,2,2,2)\},
	$$
	and put $C_{\bm{a}}=\frac{1}{8}, \frac{1}{10}, \frac{1}{3},$ and $\frac{7}{8}$ accordingly. Then we have the following:
	
	\begin{enumerate}[label={\rm(\arabic*)},leftmargin=*]
		\item\label{almostuniv:1} Every integer $N\ge C_{\bm{a}}(m-2)^3$ is represented by $P_{m,\bm{a}}$, unless 
		$$
		\bm{a}\in \{(1,1,1,1),(1,1,2,2)\} \text{ and } m\equiv 0\Mod{4} \text{ with } m>8.
		$$ 
		\item\label{almostuniv:2} In each exceptional case, there are infinitely many positive integers which are not represented by $P_{m,\bm{a}}$.

	\end{enumerate}
	
\end{thm}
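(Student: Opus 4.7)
The strategy is to translate the representation problem into one about weighted sums of squares subject to congruence conditions on the variables, then apply the Gauss three-square theorem in the spirit of Cauchy's proof of the polygonal number theorem (as refined in \cite{N2}). The identity
\begin{equation*}
8(m-2) P_m(x) = \bigl(2(m-2)x - (m-4)\bigr)^2 - (m-4)^2,
\end{equation*}
via the substitution $y_i = 2(m-2)x_i - (m-4)$, rewrites the equation $P_{m,\bm{a}}(\bm{x})=N$ as
\begin{equation*}
a_1y_1^2 + a_2y_2^2 + a_3y_3^2 + a_4y_4^2 = 8(m-2)N + |\bm{a}|(m-4)^2,
\end{equation*}
with $y_i \equiv -(m-4) \pmod{2(m-2)}$ for all $i$.

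For part \ref{almostuniv:1}, the plan is to fix one variable (say $y_4$) in its arithmetic progression modulo $2(m-2)$ and apply a Cauchy-type refinement of the Gauss three-square theorem to the residual $L := 8(m-2)N + |\bm{a}|(m-4)^2 - a_4 y_4^2$, representing it as $\sum_{i\ne 4} a_i y_i^2$ with the required congruences. This needs $L$ (after suitable normalization) to avoid the forbidden form $4^s(8t+7)$ and to satisfy local conditions at $2$. Varying $y_4$ over an interval of length $\Theta(\sqrt{N/(m-2)})$ yields $\Theta(\sqrt{N/(m-2)^3})$ admissible candidates, and a $2$-adic analysis, depending on the parity of $|\bm{a}| \in \{4,5,6,7\}$ and of $m-4$, ensures at least one candidate works whenever $N \ge C_{\bm{a}}(m-2)^3$, outside the exceptional cases; the sharp constants $\tfrac18, \tfrac{1}{10}, \tfrac13, \tfrac78$ emerge from optimizing the local analysis case-by-case.

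For part \ref{almostuniv:2}, take $\bm{a}=(1,1,1,1)$ with $m=4k$, $k\ge 3$. Every admissible $y_i$ is a multiple of $4$, so writing $y_i = 4z_i$ with $z_i \equiv -(k-1)\pmod{2k-1}$ and dividing by $16$ reduces the equation to
\begin{equation*}
z_1^2 + z_2^2 + z_3^2 + z_4^2 = (2k-1)N + 4(k-1)^2.
\end{equation*}
The admissible values of $|z|$ are $k-1, k, 3k-2, 3k-1, 5k-3, \ldots$; the five smallest sums of four such squares, namely $4(k-1)^2 + j(2k-1)$ for $0\le j\le 4$, are followed by a jump to $3(k-1)^2+(3k-2)^2 = 4(k-1)^2 + (2k-1)(4k-3)$, so $N \in \{5,6,\dots,4k-4\}$ already fail to be representable. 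A similar gap structure recurs at every level of the constrained lattice $\{z \equiv -(k-1)\pmod{2k-1}\}^4$, and a $2$-adic analysis exhibits an infinite arithmetic progression of non-representable $N$. The analogous reduction, with the form $z_1^2+z_2^2+2z_3^2+2z_4^2$, handles $\bm{a}=(1,1,2,2)$.

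The main obstacle is the sharp local analysis in part \ref{almostuniv:1}: while some bound $N \ge C(m-2)^3$ follows routinely from the Cauchy argument, attaining the stated constants requires careful tracking of the $2$-adic constraints as $|\bm{a}|$ and $m\pmod 4$ vary, combined with a judicious choice of which variable to isolate. The infinite family in part \ref{almostuniv:2} also requires showing that the gap phenomenon persists for arbitrarily large $N$, which is a delicate density/lattice obstruction.
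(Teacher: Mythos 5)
Your reduction via $8(m-2)P_m(x)=\bigl(2(m-2)x-(m-4)\bigr)^2-(m-4)^2$ is fine, but the core step of your part \ref{almostuniv:1} has a genuine gap. You propose to fix $y_4$ and represent the residual $L$ by the ternary form $\sum_{i\le 3}a_iy_i^2$ \emph{with each $y_i$ in a prescribed residue class modulo $2(m-2)$}. No ``Cauchy-type refinement of the three-square theorem'' supplies this: Cauchy's lemma only controls parities, because the congruence structure in his argument is carried entirely by the auxiliary \emph{linear} equation $\sum x_i=b$, never by congruence conditions modulo a growing modulus imposed on individual variables. Forcing three variables into fixed classes mod $2(m-2)$ is a representation problem for a coset of a sublattice of index $(2(m-2))^3$, where the expected number of solutions at the threshold $N\asymp(m-2)^3$ is $O(1)$; establishing existence there is an equidistribution problem far beyond any local ($2$-adic) analysis, and it certainly cannot produce the explicit constants $\tfrac18,\tfrac1{10},\tfrac13,\tfrac78$, which you simply assert. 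The paper avoids this entirely: it works with the pair of equations $\sum a_ix_i^2=a$, $\sum a_ix_i=b$ (Lemma \ref{lemquatnec}), shows via explicit isometries and the class-number-one property of $\langle a_1,a_2,a_3,a_4\rangle$ that solvability is equivalent to $Aa-b^2\notin E_{\bm a}$ (Lemma \ref{equivlem}), and then only has to choose $b\equiv N\pmod{m-2}$ in an interval of length $B_{\bm a}(m-2)$ avoiding $E_{\bm a}$ (Lemma \ref{existb}); the constants arise as $C_{\bm a}=B_{\bm a}^2/(8A)$ from the positivity requirement $Aa-b^2>0$, not from an optimization you have not carried out.

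Part \ref{almostuniv:2} is also incomplete. Your gap computation in the reduced equation $\sum z_i^2=(2k-1)N+4(k-1)^2$ correctly shows $N\in\{5,\dots,4k-4\}$ are missed, but that is finitely many exceptions, and the gap phenomenon cannot ``recur at every level'': for large targets the number of admissible lattice points grows and the gaps between representable values close. Your fallback claim that a $2$-adic analysis yields an infinite \emph{arithmetic progression} of non-representable $N$ is unsubstantiated and almost certainly false in that form; the congruence condition on the $z_i$ is modulo the odd number $2k-1$, so there is no obstruction supported on a single residue class modulo a power of $2$. The actual exceptional set is a geometrically growing sequence: the paper takes a single non-represented $N_0$ with $(2l+1)N_0+Al^2\equiv0\pmod 4$ and defines $N_t$ by $(2l+1)N_t+Al^2=4^{nt}\bigl((2l+1)N_0+Al^2\bigr)$ with $n=\mathrm{ord}_{2l+1}(2)$; a descent argument (divisibility by $16$ forces all variables even, and after $n$ halvings the residue class $-l\pmod{2l+1}$ is restored) shows $N_t\to P_{m,\bm a}$ would imply $N_{t-1}\to P_{m,\bm a}$, contradicting the choice of $N_0$. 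This descent-by-$4$ mechanism, analogous to the $4^s(8t+7)$ structure for three squares, is the key idea missing from your sketch.
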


\begin{rmk}\label{rmk}{\color{white} a}
	\begin{enumerate}[label={\rm(\arabic*)},leftmargin=*]
		\item\label{rmk:1}  In \cite{KL}, Kane and Liu showed that there exists a unique minimal positive integer $\gamma_m$ such that for any $\bm{a}\in \mathbb{N}^k$, $P_{m,\bm a}$ is universal if and only if it represents every $N \le \gamma_m$. 
		
		For the case when $3\le m\le 9$ with $m\not\in\{7,9\}$, the value $\gamma_m$ is known; $\gamma_3=\gamma_6=8$ by Bosma and Kane \cite{BK}, $\gamma_4=15$ by Conway-Schneeberger fifteen theorem \cite{B},\cite{C}, $\gamma_5=109$ by Ju \cite{J}, and $\gamma_8=60$ by Ju and Oh \cite{JO}, so those theorems give us criteria for $\pp{\alpha}{\beta}$ to be universal. 
		It seems to be difficult to obtain the values $\gamma_m$ for $m=7,9$.
		
		\item Generalizing the number $k_{m,r,r-1}$ defined and determined in \cite{BBKKPSSV} (introduced the above), for any $r\in\mathbb{N}$, let us define the number
		$$
		k_{m,r}:=\min \{k \mid P_{m,\bm{a}} \text{ is universal for some } \bm{a}\in \mathbb{N}_{\le r}^k\},
		$$
		where $\mathbb{N}_{\le r} = \{a \in\mathbb{N} \mid a\le r\}$. Then $k_{m,r}\le k_{m,r,r-1}$ follows from the definition. In particular, by Theorem \ref{mainthm}, $k_{m,2,1}=k_{m,2}=\lfloor\frac{m}{2}\rfloor$ for any odd integer $m$ with $m \ge 11$, while $k_{m,2}=\lfloor\frac{m}{2}\rfloor-1<\lfloor\frac{m}{2}\rfloor=k_{m,2,1}$ for any even integer $m$ with $m\ge 10$ and $k_{9,2}=4<5=k_{9,2,1}$.
		
		\item Theorem \ref{almostuniv} \ref{almostuniv:1} will be proved with the aid of Lemmas \ref{lemquatnec}-\ref{existb}.
		In those lemmas, the following system of diophantine equations are considered:
		$$
		\begin{cases}
			a_1x_1^2+a_2x_2^2+a_3x_3^2+a_4x_4^2=a\\
			a_1x_1+a_2x_2+a_3x_3+a_4x_4=b,
		\end{cases}	
		$$
		where $a,a_1,a_2,a_3,a_4\in\mathbb{N}$ and $b\in\z$.
		We study a solvability of the above equation over $\z$ by connecting it with an existence of a representation of a binary $\z$-lattice by a diagonal quaternary $\z$-lattice with certain constraint. 
		When $(a_1,a_2,a_3,a_4)=(1,1,1,1)$, the above equation was considered by Goldmakher and Pollack \cite{GP}, and our approach was made in this case by Hoffmann \cite{H}. Hence, our strategy could be considered as a generalization of the method used in \cite{H}.
		
		\item In addition to what we introduced previously, Meng and Sun \cite{MS} also showed that if $m\not\equiv0\Mod{4}$, then any $N\ge 1628(m-2)^3$ can be written as 
		$$
		P_m(x_1)+P_m(x_2)+2P_m(x_3)+2P_m(x_4) \text{ with } x_1,x_2,x_3,x_4\in\nn,
		$$
		while if $m\equiv 0\Mod{4}$, then there are infinitely many positive integers not of the above form. 
		Therefore, the statement
		
		\centerline{
			``any sufficiently large positive integer is represented by $P_{m,\bm{a}}$ over $\z$''} 
		\noindent has nothing to prove as we release the condition $x_i\in\nn$ to $x_i\in \z$, however, Theorem \ref{almostuniv} \ref{almostuniv:1} shows improvements on constants $C_{\bm{a}}$.
		On the other hand, Theorem \ref{almostuniv} \ref{almostuniv:2} tells us something more.
	\end{enumerate}
\end{rmk}

The rest of the paper is organized as follows. In Section \ref{preliminaries}, we introduce geometric language and theory of $\z$-lattices which are used to prove our theorems. In Section \ref{MainTheorem}, we classify all the universal sums $\pp{\alpha}{\beta}$ and prove Theorem \ref{univcriterion}.
Finally, in Section \ref{quaternarysum}, we prove Theorem \ref{almostuniv}, giving information for the integers represented by each of the sums $\ppp{1,1,1,1},\ppp{1,1,1,2},\ppp{1,1,2,2},$ and $\ppp{1,2,2,2}$.

\section{Preliminaries}\label{preliminaries}
In this section, we introduce several definitions, notations and well-known results on quadratic forms in the better adapted geometric language of quadratic spaces and lattices.
A $\z$-lattice $L=\z v_1+\z v_2+\dots+\z v_k$ of rank $k$ is a free $\z$-module equipped with non-degenerate symmetric bilinear form $B$ such that $B(v_i,v_j) \in \q$ for any $i,j$ with $1\le i, j \le k$. 
The corresponding quadratic map is defined by $Q(v)=B(v,v)$ for any $v \in L$.
We say a $\z$-lattice $L$ is {\it positive definite} if $Q(v)>0$ for any non-zero vector $v \in L$, and we say $L$ is {\it integral} if $B(v,w) \in \z$ for any $v,w \in L$.
Throughout this article, we always assume that a $\z$-lattice is positive definite and integral.
If $B(v_i,v_j)=0$ for any $i\neq j$, then we simply write 
$$L=\langle Q(v_1),\ldots,Q(v_k) \rangle.$$
The corresponding quadratic form in $k$ variables is defined by
$$
f_L(x_1,\ldots,x_k)=\sum_{1\le i,j \le k} B(v_i,v_j)x_ix_j.
$$

For two $\z$-lattices $\ell$ and $L$, we say $\ell$ is {\em represented} by $L$ if there is a linear map $\sigma : \ell \to L$ such that 
$$
B(\sigma(x),\sigma(y))=B(x,y) \quad \text{for any $x,y \in \ell$,} 
$$
and in this case, we write $\ell \ra L$. Such a linear map $\sigma$ is called a {\em representation} from $\ell$ to $L$.
If $\ell\ra L$ and $L\ra \ell$, then we say they are {\em isometric} to each other, and we write $\ell\cong L$.
For any prime $p$, we define localization of $L$ at $p$ by $L_p=L\otimes_\z \z_p$. 
We say $\ell$ is {\em locally represented} by $L$ if there is a local representation $\sigma_p : \ell_p \ra L_p$ which preserves the bilinear forms for any prime $p$. 
For a $\z$-lattice $L$, we define the {\em genus} $\gen(L)$ of $L$ as
$$
\gen(L)=\{K \text{ on } \q L \mid K_p \cong L_p \text{ for any prime } p\},
$$
where $\q L= \{ \alpha v \mid \alpha \in \q , v \in L\}$ is the quadratic space on which $L$ lies. The isometric relation induces an equivalence relation on $\gen(L)$, and we call the number of different equivalence classes in $\gen(L)$, the {\em  class number} of $L$.

Any unexplained notation and terminology can be found in \cite{OM2}.   

The following is well-known local-global principle for $\z$-lattices.

\begin{thm}\label{localglobal}
	Let $\ell$ and $L$ be $\z$-lattices. If $\ell$ is locally represented by $L$, then $\ell\ra L'$ for some $L'\in\gen(L)$.
	Moreover, if the class number of $L$ is one, then $\ell \ra L$  if and only if $\ell$ is locally represented by $L$.
\end{thm}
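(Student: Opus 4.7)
The plan is to reduce everything to the first assertion; the second (class-number-one) statement is then immediate, since the converse direction (if $\ell\to L$ then $\ell_p\to L_p$ for every $p$) is obtained by localization, and when $h(L)=1$ the genus $\gen(L)$ consists of a single isometry class, so the $L'$ produced by the first part is itself isometric to $L$.

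For the main assertion, my first step would be to pass from the local data to a rational representation. Since $\ell_p\to L_p$ for every prime $p$, we in particular get $\q\ell_p\to \q L_p$, and Hasse--Minkowski then supplies an isometric embedding $\sigma:\q\ell\hookrightarrow \q L$. Replacing $\ell$ by $\sigma(\ell)$, we may assume $\ell\subseteq \q L$ as a sublattice.

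Next I would compare the two local embeddings. By hypothesis, for each prime $p$ there exists an isometry $\tau_p:\ell_p\hookrightarrow L_p$; the composition with $\sigma_p^{-1}$ gives an isometry $\sigma_p(\ell_p)\to \tau_p(\ell_p)$ between two sublattices of $\q L_p$. Witt's extension theorem (applied on the quadratic space $\q L_p$) then yields $\phi_p\in O(\q L_p)$ with $\phi_p(\sigma(\ell_p))=\tau_p(\ell_p)\subseteq L_p$. At all but finitely many primes $\sigma(\ell_p)$ already lies in $L_p$, so we may take $\phi_p=\mathrm{id}$ there. The finitely many non-trivial $\phi_p$ are then the only obstruction to gluing.

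Finally, define the candidate $\z$-lattice by
$$L' \;:=\; \q L \,\cap\, \bigcap_{p}\phi_p^{-1}(L_p).$$
Because $\phi_p=\mathrm{id}$ outside a finite set of primes, this intersection defines a $\z$-lattice of full rank in $\q L$ with localizations $L'_p=\phi_p^{-1}(L_p)\cong L_p$ for every $p$; hence $L'\in\gen(L)$. Moreover $\sigma(\ell_p)\subseteq\phi_p^{-1}(L_p)=L'_p$ for all $p$, and a $\z$-lattice is the intersection of its localizations inside its rational span, so $\sigma(\ell)\subseteq L'$, which is the desired global representation $\ell\to L'$. The main technical point I expect to be the real obstacle is the verification that the local-to-global intersection above is a well-defined $\z$-lattice with the correct localizations, together with the appeal to Witt's theorem ensuring that the finitely many required local corrections live inside $O(\q L_p)$ rather than merely in the similitude group; once these pieces are in place, everything else is formal.
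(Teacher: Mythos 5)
Your argument is correct in substance, but it differs from the paper in that the paper does not prove Theorem \ref{localglobal} at all: it simply cites 102:5 of O'Meara, and what you have written out is essentially the classical proof underlying that citation. Your three steps are exactly the standard ones: (i) pass from local representability to a rational representation $\sigma:\q\ell\ra\q L$ by Hasse--Minkowski; (ii) at the finitely many primes where $\sigma(\ell)_p\not\subseteq L_p$, use Witt extension on the regular space $\q_p L$ to replace $\sigma$ by $\phi_p\circ\sigma$ landing in $L_p$; (iii) glue the corrected local lattices $\phi_p^{-1}(L_p)$ into a global lattice $L'$ in the same genus containing $\sigma(\ell)$. Two small points deserve explicit mention if this were written in full. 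First, Hasse--Minkowski for representations requires the archimedean place as well; here it is harmless because all lattices in the paper are positive definite and $\mathrm{rank}\,\ell\le \mathrm{rank}\,L$, but your proposal only invokes the finite primes. Second, the assertion that $L':=\q L\cap\bigcap_p\phi_p^{-1}(L_p)$ is a lattice with localizations exactly $\phi_p^{-1}(L_p)$ is itself a standard but nontrivial lemma (prescribing local lattices that agree with $L_p$ at almost all primes determines a unique global lattice, 81:14 in \cite{OM2}); you correctly flag this as the technical crux rather than proving it. With those two references supplied, your proof is complete, and the class-number-one statement follows formally, as you say, since localization gives the forward direction and a one-class genus forces $L'\cong L$.
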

\begin{proof}
	See 102:5 of \cite{OM1}.
\end{proof}

Note that in case when $\ell$ is a unary $\z$-lattice $\df{n}$, $\ell \ra L$ if and only if $n=f_L(\bm{x})$ is solvable over $\z$, and $\ell$ is locally represented by $L$ if and only if $n=f_L(\bm{x})$ is solvable over $\z_p$ for any prime $p$.
The following lemma plays an important role in the proof of Theorem \ref{mainthm}, hence so does in the proof of Theorem \ref{univcriterion}.

\begin{lem}\label{lem1222(m-2)N}
	The sum $\ppp{1,2,2,2}$ represents every integer in the set
	$$\{ (m-2)N\in \nn : N\neq 2^{2s}(8t+1) \text{ for any } s,t\in\nn \}.$$
	
\end{lem}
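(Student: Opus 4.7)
The plan is to reduce to a single ternary quadratic form and apply the local-to-global theorem (Theorem \ref{localglobal}). Since $P_m(x) \equiv x \pmod{m-2}$, taking $x_1 = -2(x_2+x_3+x_4)$ forces the whole sum $\ppp{1,2,2,2}$ to be divisible by $m-2$, and a direct expansion gives
\[
\ppp{1,2,2,2}\bigl(-2(x_2+x_3+x_4),\,x_2,\,x_3,\,x_4\bigr) = (m-2)\bigl[\,x_2^2+x_3^2+x_4^2+2(x_2+x_3+x_4)^2\,\bigr].
\]
Hence the lemma is equivalent to the claim that the ternary form
\[
F(x,y,z) := x^2+y^2+z^2+2(x+y+z)^2
\]
represents every $N \in \n$ with $N \ne 2^{2s}(8t+1)$.

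Let $L$ be the $\z$-lattice with Gram matrix $I_3+2J$ (where $J$ is the $3\times 3$ all-ones matrix), so that $f_L = F$ and $\det L = 7$. I would first analyze $L$ locally. At every odd prime $p \ne 7$ the lattice $L_p$ is unimodular with $p$-unit discriminant and represents all of $\z_p$; at $p = 7$ the Jordan splitting $\langle 1,1\rangle \perp \langle 7\rangle$ again represents every element of $\z_7$. At $p=2$ a diagonalization shows $L_2 \cong \langle 3,3,7\rangle$, an odd unimodular ternary of Hasse invariant $-1$, and a residue computation mod $8$ shows that $F$ takes precisely the residues $\{0,2,3,4,5,6,7\}\pmod 8$ but never $1$. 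Combining this with the scaling identity $F(2x,2y,2z) = 4F(x,y,z)$ and its converse (if $4 \mid F(x,y,z)$, then $2 \mid x,y,z$) identifies the set of $\z_2$-integers represented by $L_2$ as exactly $\z_2 \setminus \{2^{2s}(8t+1) : s, t \in \nn\}$.

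To invoke Theorem \ref{localglobal}, it remains to show that the class number of $\gen(L)$ equals $1$. Minkowski's bound forces the minimum of any reduced form of discriminant $7$ to be at most $(28/3)^{1/3} < 3$, hence $1$ or $2$. A reduced form of minimum $1$ in the genus would split off $\langle 1\rangle$ and would therefore have Hasse invariant $+1$ at $p=2$ --- the only positive definite binary $\z$-lattices of discriminant $7$ are $\langle 1,7\rangle$ and $2x^2+2xy+4y^2$, and both have Hasse invariant $+1$ --- contradicting the $-1$ of $L_2$. Hence every class in $\gen(L)$ has minimum $2$, and an enumeration of Minkowski-reduced ternary forms of discriminant $7$ with $a_{11}=2$ identifies $L$ as the unique class. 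By Theorem \ref{localglobal}, $F$ then represents $N$ over $\z$ if and only if it is locally represented everywhere, i.e., if and only if $N \ne 2^{2s}(8t+1)$. The chief technical obstacle is the $2$-adic analysis --- pinning down $L_2$ and its Hasse invariant, ruling out the minimum-$1$ case in $\gen(L)$, and showing that the set of $\z_2$-integers represented by $L_2$ is exactly the complement of $\{2^{2s}(8t+1)\}$.
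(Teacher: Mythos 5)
Your reduction is exactly the paper's: substituting $x_1=-2(x_2+x_3+x_4)$ (i.e.\ working on the hyperplane where the linear part vanishes) produces the same ternary form, since $x^2+y^2+z^2+2(x+y+z)^2=3x^2+3y^2+3z^2+4(xy+yz+zx)$, and the paper likewise concludes via class number one and Theorem \ref{localglobal}, simply asserting the local computations that you spell out. Your $2$-adic analysis (odd unimodular $L_2\cong\langle 3,3,7\rangle$, residues mod $8$, the descent $4\mid F\Rightarrow 2\mid x,y,z$, exceptional set $2^{2s}(8t+1)$) and your class-number argument (minimum at most $2$ by the Hermite-type bound --- the standard constant is $(4/3)\cdot 7^{1/3}<3$, but any of these bounds suffices --- minimum $1$ excluded by the Hasse invariant at $2$, then enumeration of reduced forms with leading coefficient $2$) are sound and usefully fill in what the paper leaves to the reader.

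However, your step at $p=7$ is wrong as written. The Jordan splitting of $L_7$ is not $\langle 1,1\rangle\perp\langle 7\rangle$, and moreover $\langle 1,1,7\rangle$ does \emph{not} represent every element of $\z_7$: since $-1$ is a nonsquare modulo $7$, $x^2+y^2\equiv 0\Mod{7}$ forces $7\mid x$ and $7\mid y$, so $x^2+y^2+7z^2=7u$ is solvable over $\z_7$ only when $u$ is a square unit (or divisible by $7$); for instance $21$ is not represented. If that really were the local structure, the lemma itself would be false, because extra exceptional classes at $7$ would appear. The correct computation: diagonalizing the Gram matrix of $L$ over $\z_7$ gives, e.g., $\langle 3,\,15,\,35\rangle$ (equivalently $\langle 2,6\rangle\perp\langle 21\rangle$ from a reduced Gram matrix), whose unimodular binary part has discriminant a nonsquare unit, but with $-\det$ a square modulo $7$; hence that binary part is isotropic, splits a hyperbolic plane, and already represents all of $\z_7$. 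So the conclusion you need at $7$ is true, but for a different reason than the one you give, and your justification as stated would fail. With this local computation corrected, your proof coincides with the paper's argument.
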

\begin{proof}
	Consider $(x_1,\ldots,x_4)\in \z^4$ in the hyperplane $x_1+2x_2+2x_3+2x_4=0$. Then we have
	$$
	\begin{array}{rl}
		\ppp{1,2,2,2}(x_1,x_2,x_3,x_4)\!\!\!\!&
		=\frac{m-2}{2}((-2x_2-2x_3-2x_4)^2+2x_2^2+2x_3^2+2x_4^2)\\
		&=(m-2)(3x_2^2+3x_3^2+3x_4^2+4(x_2x_3+x_3x_4+x_4x_2)).
	\end{array}
	$$
	Note that the $\z$-lattice $L$ of rank $3$ to which the ternary quadratic form $3x_2^2+\cdots$ in the last equation corresponding has class number one. Moreover, one may check that $L$ locally represents every integer not of the form $2^{2s}(8t+1)$. Therefore, the lemma follows from Theorem \ref{localglobal}.	
\end{proof}
\section{Main Theorem}\label{MainTheorem}

\begin{lem}\label{lemneccond}
	Let $m\ge9$ be a positive integer and let $\alpha,\beta$ be non-negative integers. Assume that $\pp{\alpha}{\beta}$ is universal. Then we have the following:
	\begin{newenum}
		\item \label{lemneccond:1} $\pp{\alpha'}{\beta'}$ is universal for any integers $\alpha'\ge\alpha$ and $\beta'\ge\beta$,
		\item \label{lemneccond:2} $\pp{\alpha+2\beta'}{\beta-\beta'}$ is universal for any integer $0\le \beta'\le\beta$,
		\item \label{lemneccond:3} $\alpha\ge \max(m-2\beta-4,1)$,
		\item \label{lemneccond:4} if $\beta = \lfloor \frac{m}{2}\rfloor -2$, then $\alpha \ge 2$.
	\end{newenum}
\end{lem}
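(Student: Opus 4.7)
The plan is to handle (1) and (2) by elementary manipulations of the sum, and (3) and (4) by case analyses driven by the small generalized $m$-gonal numbers.

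For (1), I would use $P_m(0)=0$: any representation of $N$ by $\pp{\alpha}{\beta}$ extends to one by $\pp{\alpha'}{\beta'}$ for $\alpha'\ge\alpha$, $\beta'\ge\beta$ by padding the extra variables with $0$. For (2), I would use the identity $2P_m(y) = P_m(y) + P_m(y)$: splitting $\beta'$ of the coefficient-$2$ terms of a given representation into pairs of coefficient-$1$ terms transfers universality from $\pp{\alpha}{\beta}$ to $\pp{\alpha+2\beta'}{\beta-\beta'}$.

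For (3), I would catalogue the first few nonnegative generalized $m$-gonal numbers, which for $m\ge 9$ are $0,\,1,\,m-3,\,m,\,3m-8,\ldots$ in increasing order. Representing $N=1$ forces every summand to vanish except one coefficient-$1$ term equal to $1$, since any nonzero coefficient-$2$ contribution is at least $2$; hence $\alpha\ge 1$. Representing $N=m-4$ forces every summand to lie in $\{0,1\}$ because $m-3>m-4$, so $m-4 = a+2b$ for some $a\le \alpha$, $b\le \beta$, and taking $b$ as large as possible gives $\alpha\ge m-4-2\beta$. Combining these bounds yields the stated inequality.

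For (4), I would argue by contradiction: assume $\alpha=1$ and $\beta=\lfloor m/2\rfloor -2$ is universal, and try to represent $N=m-2$. Since $m-2$ is not itself a generalized $m$-gonal number for $m\ge 9$ (the values near zero being $0,1,m-3,m$), the single coefficient-$1$ summand $P_m(x)$ lies in $\{0,1,m-3\}$, while each coefficient-$2$ summand $2P_m(y_j)$ lies in $\{0,2\}$ (as $2(m-3)>m-2$). A three-way split on $P_m(x)$ rules out every possibility by comparing parity of $m$ with the slot count $\beta$: the case $P_m(x)=m-3$ leaves residue $1$; the case $P_m(x)=1$ forces $m$ odd and $\sum P_m(y_j) = (m-3)/2$, exceeding the bound $\beta=(m-5)/2$; the case $P_m(x)=0$ forces $m$ even and $\sum P_m(y_j) = (m-2)/2$, exceeding $\beta=(m-4)/2$. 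Hence $\alpha=1$ cannot yield universality, proving $\alpha\ge 2$. The only genuine obstacle is this parity/size bookkeeping in (4); parts (1)--(3) are essentially mechanical consequences of the definitions.
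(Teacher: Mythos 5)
Your proposal is correct and follows essentially the same route as the paper: parts (1)--(2) by padding/splitting, part (3) from representing $1$ and $m-4$ using the fact that the smallest generalized $m$-gonal number after $0,1$ is $m-3$, and part (4) by ruling out $\alpha=1$ via the constraint $P_m(x_1)\in\{0,1,m-3\}$ and $2P_m(y_j)\in\{0,2\}$ when representing $m-2$. Your parity-and-count case analysis in (4) simply spells out the step the paper dismisses as ``it is impossible,'' and it is accurate.
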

\begin{proof}
	The statements \ref{lemneccond:1} and \ref{lemneccond:2} are obvious. 
	On the other hand, since $\pp{\alpha}{\beta}$ represents $1$, we have $\alpha \ge 1$. Note that the smallest generalized $m$-gonal number other than $0$ and $1$ is $m-3$.
	So, in order for the equation
	$$
	m-4=\sum_{i=1}^\alpha P_m(x_i)+\sum_{i=\alpha+1}^{\alpha+\beta}2P_m(x_i)
	$$
	to have a solution $\bm{x}\in\z^{\alpha+\beta}$, we should have $\alpha+2\beta\ge m-4$. This proves \ref{lemneccond:3}.
	Now assume that $\beta=\lfloor \frac{m}{2}\rfloor -2$. Then we have $\alpha\ge1$ by \ref{lemneccond:3}.
	If the equation 
	$$
	m-2=P_m(x_1)+\sum_{i=2}^{1+\beta}2P_m(x_i)
	$$
	has a solution, then we should have $P_m(x_1)\in \{0,1,m-3\}$ and $2P_m(x_i)\in\{0,2\}$ for each $i$ with $2\le i\le 1+\beta$. However, it is impossible. Therefore, we should have $\alpha \ge 2$.
\end{proof}

\begin{thm}\label{mainthm}
	Let $m\ge 9$ be a positive integer and let $\alpha$ and $\beta$ be non-negative integers. Then the sum $\pp{\alpha}{\beta}$ is universal if and only if
	$$ \alpha\ge
	\begin{cases}
		1 & \text{if } \beta\ge \lfloor \frac{m}{2} \rfloor -1,\\
		2 & \text{if } \beta= \lfloor \frac{m}{2} \rfloor -2,\\
		m-2\beta-4 & \text{if } 0\le \beta \le \lfloor \frac{m}{2} \rfloor -3,
	\end{cases}
	$$
	unless $m=9$ and $\beta=3$, in which case $\pp[9]{\alpha}{3}$ is universal if and only if $\alpha\ge 2$.
\end{thm}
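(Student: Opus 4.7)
The plan is to prove necessity first. Most bounds follow directly from Lemma~\ref{lemneccond}, except for the exceptional necessity in the $m=9,\beta=3$ case, where I would directly verify that $\pp[9]{1}{3}$ fails to represent $34$: a finite check, enumerating $x\in\z$ with $P_9(x)\le 34$ and showing that no choice of three values from $\{2P_9(y):y\in\z\}=\{0,2,12,18,38,48,\dots\}$ sums to $34-P_9(x)$. For sufficiency, the monotonicity in Lemma~\ref{lemneccond}\,(1)--(2) reduces the problem to a handful of boundary pairs $(\alpha,\beta)$: namely $(3,\lfloor m/2\rfloor-3)$, $(2,\lfloor m/2\rfloor-2)$, and $(1,\lfloor m/2\rfloor-1)$ for odd $m\ge 11$; $(2,\lfloor m/2\rfloor-3)$ and $(1,\lfloor m/2\rfloor-1)$ for even $m\ge 10$; and $(3,1)$, $(2,2)$, $(1,4)$ for $m=9$.

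For each such boundary pair with $\beta\ge 3$, my main engine will be Lemma~\ref{lem1222(m-2)N}. I would designate one coefficient-$1$ variable $x_1$ and three coefficient-$2$ variables $y_1,y_2,y_3$ as a free block, leaving the remaining $\alpha+\beta-4$ variables as an adjustable fixed block. Given any target $N$, I would choose the fixed-block values (from a small window such as $\{0,\pm 1,\pm 2\}$) so that the residual $R=N-T$, with $T$ the fixed-block contribution, satisfies (i) $R\ge 0$, (ii) $(m-2)\mid R$, which by $P_m(x)\equiv x\Mod{m-2}$ reduces to a linear congruence on the fixed-block values, and (iii) $R/(m-2)$ is not of the form $2^{2s}(8t+1)$. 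Lemma~\ref{lem1222(m-2)N} then produces the representation of $R$ by the free block. A counting argument should show that the fixed block offers enough degrees of freedom to jointly enforce (i)--(iii) in every boundary case as soon as $m\ge 19$.

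The range $9\le m\le 18$ is where I expect the main difficulty, since there the fixed block is too small for the lemma-based argument. I plan to handle each such $m$ individually, using Theorem~\ref{almostuniv} to cover $N$ beyond $C_{\bm{a}}(m-2)^3$: when the boundary sum is itself quaternary (e.g.\ $(1^2,2^2)$ or $(1^3,2^1)$), Theorem~\ref{almostuniv} applies directly; otherwise I would embed a quaternary sub-sum $(1^a,2^b)$ with $a+b=4$ inside the given sum, choosing its shape to avoid the $m\equiv 0\Mod 4$ exception (e.g.\ using $(1,1,1,2)$ or $(1,2,2,2)$ when $m\equiv 0\Mod 4$), and absorb the remaining variables into small fixed offsets. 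The residual finite range is then dispatched by computer search. The pair $(m,\beta)=(9,3)$ requires universality of $(1^2,2^3)$, which I would establish by embedding a $(1,1,2,2)$ quaternary sub-sum (admissible since $9\not\equiv 0\Mod 4$) followed by a finite check. The principal difficulties to anticipate are (a) arranging step (iii) in the systematic argument so that the bad $2^{2s}(8t+1)$ values do not block every available fixed-block configuration, and (b) orchestrating the case analysis for $9\le m\le 18$ so that each universality verification matches the hypotheses of Theorem~\ref{almostuniv}.
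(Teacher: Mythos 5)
Your overall architecture (necessity from Lemma \ref{lemneccond} plus the $34\nra \pp[9]{1}{3}$ check; reduction to boundary pairs; a free $(1,2,2,2)$ block fed by Lemma \ref{lem1222(m-2)N} with the remaining variables adjusting the residue mod $m-2$ for $m\ge 19$; Theorem \ref{almostuniv} plus finite computation for $9\le m\le 18$) is essentially the paper's proof, and your anticipated difficulty (a) is genuinely resolvable: the paper dodges the bad set $2^{2s}(8t+1)$ by arranging, for each residue class, two fixed-block representatives $r_1,r_2$ with $r_2-r_1=2(m-2)$, so the two quotients differ by $2$ and cannot both be of the bad form. However, there is a concrete gap in your plan for the boundary pair $(\alpha,\beta)=(1,\lfloor m/2\rfloor-1)$ when $m$ is even and large. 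There the sum has exactly one coefficient-$1$ variable, which your scheme consumes inside the free $(1,2,2,2)$ block; the fixed block then consists solely of coefficient-$2$ variables, so its contribution $T$ is even, while $m-2$ is also even. For odd $N$ the residual $R=N-T$ is odd and can never be a multiple of $m-2$, so condition (ii) of your engine is unsatisfiable and no counting argument can rescue it. This case is not a technicality: it is precisely the statement $k_{m,2,1}=\lfloor m/2\rfloor$, i.e.\ the universality of $\pp{1}{\lfloor m/2\rfloor-1}$, which the paper does not reprove but imports from Theorem 1.1(3) of \cite{BBKKPSSV} for all $m\ge 14$ (handling $9\le m\le 13$ by the computational route you also describe). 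So you must either cite that result or supply a genuinely different argument for even $m$; your Lemma-\ref{lem1222(m-2)N}-based engine, as stated, only has a chance for this boundary pair when $m-2$ is odd.

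A smaller remark: for odd $m\ge 19$ your engine does apply to all three boundary pairs, but you should make explicit that the fixed block can realize, in every residue class mod $m-2$ and within the required range $R\ge 0$, two values differing by exactly $2(m-2)$ (the paper does this with the explicit sets $R_1$ and $R_2=R_1+2(m-2)$, each a complete residue system consisting of integers represented by the fixed block), together with a separate direct check of the finitely many small $N$ for which $R\ge 0$ fails. With those two points supplied, and with \cite{BBKKPSSV} (or a replacement argument) covering $(1,\lfloor m/2\rfloor-1)$ for even $m\ge 14$, your proof would match the paper's.
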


\begin{proof}
	The ``only if" part follows immediately from Lemma  \ref{lemneccond} \ref{lemneccond:3} and \ref{lemneccond:4}, and the fact that $\pp[9]{1}{3}$ cannot represent $34$.
	Now we prove the ``if" part. 
	Note that if we proved that $\pp{m-2\beta-4}{\beta}$ is universal when $\beta=\lfloor \frac{m}{2} \rfloor -3$, then Lemma \ref{lemneccond} \ref{lemneccond:2} implies that it is also universal for any $0\le \beta \le \lfloor \frac{m}{2} \rfloor -3$. 
	Moreover, when $m$ is even, if $\pp{2}{(m-6)/2}$ is universal, then so is  $\pp{2}{(m-4)/2}$ by Lemma \ref{lemneccond} \ref{lemneccond:1}.
	Therefore, in view of Lemma \ref{lemneccond} \ref{lemneccond:1}, it is enough to prove the following:\\
	\begin{newenum}
		\item[(i)] $\pp{1}{\lfloor m/2 \rfloor -1}$ for any $m\ge10$, $\pp[9]{2}{3}$, and $\pp[9]{1}{4}$ are universal,
		\item[(ii)] $\pp{2}{(m-5)/2}$ and $\pp{3}{(m-7)/2}$ are universal for any odd integer $m$,
		\item[(iii)] $\pp{2}{(m-6)/2}$ is universal for any even integer $m$.\\
	\end{newenum}
	
	First, we prove (i). The statement for any $m\ge14$ is proved in Theorem 1.1 (3) of \cite{BBKKPSSV} (see Section 4 of \cite{BBKKPSSV} for the proof). For any $9\le m \le 13$, note that $\lfloor \frac{m}{2} \rfloor -1\ge 3$. By Theorem \ref{almostuniv} (1), we know that $\pp{1}{3}$ represents every integer $N\ge\frac{7}{8}(m-2)^3$. Therefore, by checking  (by a computer program) whether or not the integers less than $\frac{7}{8}(m-2)^3$ are represented by $\pp{1}{3}$, one may determine the set $E(\pp{1}{3})$ of all integers that are not represented by $\pp{1}{3}$. From this set, one may conclude what we want; for example, we have $E(\pp[9]{1}{3})=\{34\}$, so $34$ is represented by both $\pp[9]{2}{3}$ and $\pp[9]{1}{4}$. Hence they are universal.
	
	Next, we prove (ii) and (iii). For any $9 \le m \le 18$, one may similarly prove that the sums are universal as above, by determining the set $E(\pp{1}{3}), E(\pp{2}{2}),$ or $E(\pp{3}{1})$ with the aid of Theorem \ref{almostuniv} (1).
	Now, we assume that $m\ge 19$. 
	We first prove the universality of $\pp{2}{(m-5)/2}=\pp{1}{3}+\pp{1}{(m-11)/2}$
	for any odd integer $m$ with $m\ge19$.
	Let $N$ be a non-negative integer and let
	$$
	R_1=\{0,1,\ldots,m-10,2m-11,3m-12,4m-13,4m-12,3m-9,2m-6,m-3\},
	$$
	$$
	R_2=\{r+2(m-2) \mid r\in R_1\} \quad \text{and} \quad R=R_1\cup R_2.
	$$
	Note that $R_i$ is a complete set of residues modulo $m-2$ for each $i=1,2$, and one may check that any integer $r\in R$ is represented by $\pp{1}{(m-11)/2}$.
	Also, one may check that every integer $N<6m-17$ is represented by $\pp{2}{(m-5)/2}$.
	Assume that $N\ge 6m-17$. For each $i=1,2$, there is a unique $r_i\in R_i$ such that 
	$$
	N\equiv r_i \Mod{m-2} \quad \text{ and } \quad N-r_i\ge0.
	$$
	Write $N-r_i=c_i(m-2)$.
	Since $r_2-r_1=2(m-2)$, we have $c_1-c_2=2$, hence for some $i_0\in\{1,2\}$, $c_{i_0}$ is not of the form $2^{2s}(8t+1)$ for any $s,t\in\nn$.
	Therefore, by Lemma \ref{lem1222(m-2)N}, $N-r_{i_0}$ is represented by $\pp{1}{3}$, hence $N=(N-r_{i_0})+r_{i_0}$ is represented by $\pp{2}{(m-5)/2}$.
	
	For the proof of the universality of  $\pp{3}{(m-7)/2}=\pp{1}{3}+\pp{2}{(m-13)/2}$ for any odd integer $m$ with $m\ge19$, and $\pp{2}{(m-6)/2}=\pp{1}{3}+\pp{1}{(m-12)/2}$ for any even integer $m$ with $m\ge19$, we take
	$$
	R_1=\{0,1,\ldots,m-11,2m-12,3m-13,4m-14,5m-15,4m-12,3m-9,2m-6,m-3\}.
	$$
	Then one may show the universality by repeating the same argument.
\end{proof}

We are now ready to prove Theorem \ref{univcriterion}.

\begin{proof}[Proof of Theorem \ref{univcriterion}]
	The proof is nothing but combining Lemma \ref{lemneccond} and Theorem \ref{mainthm} appropriately. 
	When $m\ge 10$, assume that $P=\pp{\alpha}{\beta}$ represents $1, m-4,$ and $m-2$.
	Since $1\ra P$, we should have $\alpha\ge1$. 
	Moreover, since $m-4 \ra P$, we should have $\alpha + 2\beta \ge m-4$ (see the proof of Lemma \ref{lemneccond}).
	Thus, by Theorem \ref{mainthm}, $P$ is universal unless $\beta=\lfloor\frac{m}{2}\rfloor-2$. 
	In the case when $\beta=\lfloor\frac{m}{2}\rfloor-2$, we should have $\alpha\ge2$ in order for $\pp{\alpha}{\lfloor m/2 \rfloor-2}$ to represent $m-2$ (see the proof of Lemma \ref{lemneccond}), and therefore, $\pp{\alpha}{\lfloor m/2 \rfloor-2}$ is universal by Theorem \ref{mainthm}.
	
	When $m=9$, one may similarly show that if $P=\pp[9]{\alpha}{\beta}$ represents $1,5,$ and $7$, then it is universal, except for $\ppp[9]{1,2,2,2}$. Using Theorem \ref{almostuniv}, we may verify that $E(\ppp[9]{1,2,2,2})=\{34\}$, and so both $\pp[9]{2}{3}$ and $\pp[9]{1}{4}$ are universal. Therefore, we may conclude that if $P$ represents $1,5,7,$ and $34$, then it is universal.
	
	When $m=7$, one may show that if $P=\pp[7]{\alpha}{\beta}$ represents $1,3,$ and $5$ then $P$ should contain $\ppp[7]{1,1,1},\ppp[7]{1,1,2},$ or $\ppp[7]{1,2,2}$, and they don't represent $10,23,$ or $19$, respectively.
	On the other hand, using Theorem \ref{almostuniv}, we may verify that each of the sums $\ppp[7]{1,1,1,1},\ppp[7]{1,1,1,2},\ppp[7]{1,1,2,2},$ and $\ppp[7]{1,2,2,2}$ is universal. Therefore, we may conclude that if $P$ represents $1,3,5,10,19,$ and $23$, then it is universal.
\end{proof}

\section{Representations of quaternary sums $\pp{\alpha}{\beta}$}\label{quaternarysum}

In this section, we prove Theorem \ref{almostuniv}.
Throughout this section, let us set several notations.
For each $\bm{a}=(a_1,a_2,a_3,a_4)\in \mathbb{N}^4$, we put $A=A_{\bm{a}}=\sum_{i=1}^4a_i$, and we define the quaternary diagonal $\z$-lattice $L_{\bm{a}}$ with basis $\{w_1,w_2,w_3,w_4\}$ by
$$
L_{\bm{a}}=\z w_1+\z w_2 + \z w_3 + \z w_4 =\df{a_1,a_2,a_3,a_4}.
$$
Let 
$$
S:=\{(1,1,1,1), (1,1,1,2), (1,1,2,2),(1,2,2,2)\},
$$
and for each $\bm{a}\in S$, we define the set of integers
$$
E_{\bm{a}}=
\begin{cases}
	\{2^{2s}(8t+7) \mid s\in\nn, t \in \z \} & \text{if } \bm{a}=(1,1,1,1) \text{ or } (1,1,2,2),\\
	\{5^{2s+2}(5t\pm 2)\mid s\in\nn, t \in \z \} & \text{if } \bm{a}=(1,1,1,2),\\
	\{2^{2s}(16t+14)\mid s\in\nn, t \in \z \} & \text{if } \bm{a}=(1,2,2,2).
\end{cases}
$$
For a binary $\z$-lattice $\ell=\z v_1 + \z v_2$, we simply write $\ell=[Q(v_1),B(v_1,v_2),Q(v_2)]$.
The following lemmas will play crucial roles in proving Theorem \ref{almostuniv} (1).

\begin{lem}\label{lemquatnec}
	Let $\bm{a}=(a_1,a_2,a_3,a_4)\in\mathbb{N}^4$, $a\in \mathbb{N}$, and $b\in \z$. Assume that the following system of diophantine equations
	\begin{equation}\label{eq1}
		\begin{cases}
			a_1x_1^2+a_2x_2^2+a_3x_3^2+a_4x_4^2=a\\
			a_1x_1+a_2x_2+a_3x_3+a_4x_4=b
		\end{cases}	
	\end{equation}
	has an integer solution $x_1,x_2,x_3,x_4\in\z$. Then we have
	\begin{newenum}
		\item $a\equiv b\Mod{2}$ and $Aa-b^2\ge0$,
		\item the integer $N:=\frac{m-2}{2}(a-b)+b$ is represented by $\ppp{a_1,a_2,a_3,a_4}$.
	\end{newenum}
\end{lem}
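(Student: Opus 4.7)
The plan is to observe that the lemma dissolves into three elementary computations, so the proof is really a matter of identifying the right identity or inequality for each part. Assume throughout that $(x_1,x_2,x_3,x_4)\in\z^4$ is a solution of \eqref{eq1}.

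For the parity claim in (1), I would use the congruence $x^2\equiv x\Mod{2}$ valid for every integer $x$. Summing $a_i x_i^2\equiv a_i x_i\Mod{2}$ over $i=1,\dots,4$ and applying the two equations of \eqref{eq1} immediately yields $a\equiv b\Mod 2$. For the inequality $Aa-b^2\ge 0$, the natural tool is the Cauchy--Schwarz inequality applied to the vectors $(\sqrt{a_1},\sqrt{a_2},\sqrt{a_3},\sqrt{a_4})$ and $(\sqrt{a_1}\,x_1,\sqrt{a_2}\,x_2,\sqrt{a_3}\,x_3,\sqrt{a_4}\,x_4)$ in $\mathbb{R}^4$, which gives
$$
b^2=\Bigl(\sum_{i=1}^{4}a_ix_i\Bigr)^{2}\le \Bigl(\sum_{i=1}^{4}a_i\Bigr)\Bigl(\sum_{i=1}^{4}a_ix_i^{2}\Bigr)=A\cdot a.
$$

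For (2), the key is simply to rewrite the generalized $m$-gonal number as $P_m(x)=\tfrac{m-2}{2}(x^2-x)+x$. Then
$$
\sum_{i=1}^{4}a_iP_m(x_i)=\frac{m-2}{2}\sum_{i=1}^{4}a_i(x_i^{2}-x_i)+\sum_{i=1}^{4}a_ix_i=\frac{m-2}{2}(a-b)+b=N,
$$
where the two sums are evaluated using the two equations of \eqref{eq1}. This exhibits $N$ as a value of $\ppp{a_1,a_2,a_3,a_4}$ at $(x_1,x_2,x_3,x_4)\in\z^4$, as required.

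There is no substantive obstacle here: the entire lemma is a bookkeeping statement that converts the hypothesized solution of \eqref{eq1} into a representation of $N$ by the weighted sum of generalized $m$-gonal numbers, together with the two obvious necessary conditions (parity compatibility and the Cauchy--Schwarz bound). The real content of the strategy advertised in Remark \ref{rmk} will come in the subsequent lemmas, where the converse direction --- producing integer solutions of \eqref{eq1} from a representation of a suitable binary lattice by $L_{\bm a}$ --- must be established; the present lemma only records the easy direction.
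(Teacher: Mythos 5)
Your proposal is correct and follows essentially the same argument as the paper: parity via $x^2\equiv x\Mod{2}$, the bound $Aa-b^2\ge0$ via Cauchy--Schwarz, and part (2) by summing the identity $a_iP_m(x_i)=a_i\bigl(\tfrac{m-2}{2}(x_i^2-x_i)+x_i\bigr)$ and applying the two equations of \eqref{eq1}. No discrepancies to report.
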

\begin{proof}
	Since $x_i^2\equiv x_i\Mod{2}$, we necessarily have $a\equiv b\Mod{2}$, and the inequality $Aa-b^2\ge0$ is nothing but Cauchy-Schwarz inequality. Moreover, note that
	$$
	\frac{m-2}{2}(a-b)+b
	=\sum_{i=1}^4 a_i\left(\frac{m-2}{2}(x_i^2-x_i)+x_i\right)
	=\ppp{a_1,a_2,a_3,a_4}(x_1,x_2,x_3,x_4).
	$$
	This proves the lemma.
\end{proof}

\begin{lem}\label{equivlem}
	Let $\bm{a}\in S$, and let $a$ and $b$ be integers such that 
	$$a\equiv b\Mod{2} \quad  \text{and} \quad Aa-b^2>0.$$
	Then the following are equivalent. 
	\begin{newenum}
		\item\label{equivlem:1} The equation \eqref{eq1} has an integer solution $x_1,x_2,x_3,x_4\in\z$.
		\item\label{equivlem:2} There exist a representation $\sigma : [A,b,a]\ra L_{\bm{a}}$ such that 
		$$\sigma(v_1)=w_1+w_2+w_3+w_4.$$
		\item\label{equivlem:3} The binary $\z$-lattice $[A,b,a]$ is represented by the quaternary $\z$-lattice $L_{\bm{a}}$.
		\item\label{equivlem:4} The positive integer $Aa-b^2$ is not contained in $E_{\bm{a}}$.
	\end{newenum}
\end{lem}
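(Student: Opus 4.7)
The plan is to close the four equivalences via the cycle $(1) \Leftrightarrow (2) \Rightarrow (3) \Leftrightarrow (4)$, supplemented by $(3) \Rightarrow (2)$. First I would handle $(1) \Leftrightarrow (2)$ by direct translation: given an integer solution $(x_1,x_2,x_3,x_4)$ to \eqref{eq1}, define $\sigma : [A,b,a] \to L_{\bm{a}}$ by $\sigma(v_1)=w_1+w_2+w_3+w_4$ and $\sigma(v_2)=\sum_{i=1}^{4} x_i w_i$. Since $\{w_i\}$ is an orthogonal basis with $Q(w_i)=a_i$, the three Gram conditions become exactly $Q(\sigma(v_1))=A$, $Q(\sigma(v_2))=\sum a_i x_i^2 = a$ and $B(\sigma(v_1),\sigma(v_2))=\sum a_i x_i = b$; conversely, from such a $\sigma$ one reads off the coordinates of $\sigma(v_2)$ in the basis $\{w_i\}$ to recover \eqref{eq1}. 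The implication $(2) \Rightarrow (3)$ is then immediate.

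For $(3) \Leftrightarrow (4)$, I would invoke the classical fact that each of the four diagonal quaternary $\z$-lattices $L_{\bm{a}}$ with $\bm{a} \in S$ has class number one, so that by Theorem \ref{localglobal} the representability of $[A,b,a]$ by $L_{\bm{a}}$ is governed entirely by local conditions. For odd primes other than $5$ (the latter relevant only to $\bm{a}=(1,1,1,2)$), the hypotheses $a \equiv b \Mod{2}$ and $Aa-b^2 > 0$ already force local representability, so the nontrivial obstructions come from $p=2$ in every case and, additionally, from $p=5$ when $\bm{a}=(1,1,1,2)$. A Jordan-splitting comparison of $[A,b,a]_p$ with $(L_{\bm{a}})_p$ at these primes then shows that local representability fails precisely when $Aa-b^2 \in E_{\bm{a}}$.

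Finally, for $(3) \Rightarrow (2)$, I would exploit the action of $O(L_{\bm{a}})$ on the set of norm-$A$ vectors of $L_{\bm{a}}$, which decomposes into finitely many orbits, one of them containing $w_1+w_2+w_3+w_4$. When $\sigma(v_1)$ lies in this distinguished orbit, composing $\sigma$ with a suitable automorphism of $L_{\bm{a}}$ yields the desired representation. When it does not --- for instance $\sigma(v_1)=2w_1$ in the case $\bm{a}=(1,1,1,1)$ with $A=4$ --- expanding $\sigma(v_2)$ in the basis $\{w_i\}$ expresses $Aa-b^2$ through an auxiliary ternary form appearing as the orthogonal complement of such a short vector inside $L_{\bm{a}}$; these auxiliary ternaries also have class number one, and their exceptional sets coincide with $E_{\bm{a}}$. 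Invoking Gauss's three-squares theorem then converts the hypothesis $Aa-b^2 \notin E_{\bm{a}}$ supplied by $(4)$ into an alternative integer solution of \eqref{eq1}, hence a representation of the required form. The main obstacle is this last step, where the non-transitivity of $O(L_{\bm{a}})$ forces a case-by-case analysis matching the exceptional sets of the ternary complements to $E_{\bm{a}}$; this is precisely the generalization of Hoffmann's argument \cite{H} (for $\bm{a}=(1,1,1,1)$) advertised in Remark \ref{rmk}(3).
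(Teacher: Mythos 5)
Your treatment of \ref{equivlem:1}$\Leftrightarrow$\ref{equivlem:2}, of \ref{equivlem:2}$\Rightarrow$\ref{equivlem:3}, and of \ref{equivlem:3}$\Leftrightarrow$\ref{equivlem:4} (class number one of $L_{\bm{a}}$, Theorem \ref{localglobal}, local analysis at $p=2$ and, for $(1,1,1,2)$, at $p=5$) coincides with the paper's. The genuine gap is in your closing step \ref{equivlem:3}$\Rightarrow$\ref{equivlem:2} in the case where $\tau(v_1)$ is a ``short'' vector. First, the ternary you introduce, the orthogonal complement of the short vector (e.g.\ $(2w_1)^{\perp}\cong\df{1,1,1}$ inside $\df{1,1,1,1}$), only re-expresses data you already possess: from $\tau(v_1)=2w_1$ one reads off $2y_1=b$ and $y_2^2+y_3^2+y_4^2=(4a-b^2)/4$, which is a consequence of \ref{equivlem:3}, not a mechanism for producing a solution of \eqref{eq1}. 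To produce a solution one must instead represent $Aa-b^2$ on a \emph{coset} of the orthogonal complement of $w_1+w_2+w_3+w_4$, because recovering $x_1,\dots,x_4$ inverts a change of variables with denominator $4$ (or $2$). Concretely, for $\bm{a}=(1,1,1,1)$ a solution of \eqref{eq1} amounts to writing $4a-b^2=s_2^2+s_3^2+s_4^2$ with all $s_i\equiv b\Mod{2}$ and $b+s_2+s_3+s_4\equiv 0\Mod{4}$; Gauss's three-squares theorem, which is all your sketch invokes, yields only an unconstrained representation, and proving that a representation satisfying the congruence constraints exists (by parity analysis and sign changes here, and by analogous but messier arguments for the other three vectors $\bm{a}$, whose relevant complements are non-diagonal ternaries) is precisely the substance of the step -- it is the heart of the Hoffmann-type argument you gesture at. Likewise your assertion that ``the exceptional sets coincide with $E_{\bm{a}}$'' for these constrained ternary problems is stated without verification. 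As written, the passage from \ref{equivlem:4} to an integer solution of \eqref{eq1} is not proved.

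For comparison, the paper closes \ref{equivlem:3}$\Rightarrow$\ref{equivlem:2} without appealing to \ref{equivlem:4} at all. After sign changes and permutations of basis vectors with equal coefficients, one may assume $\tau(v_1)$ equals $w_1+w_2+w_3+w_4$ or one of $2w_1$, $2w_1+w_2$, $2w_1+w_3$ (for $\bm{a}=(1,2,2,2)$ only the first possibility occurs, since $A=7$); in the latter cases one composes $\tau$ with an explicit rational isometry $\sigma_T\in O(\q L_{\bm{a}})$, given by a Hadamard-type matrix with entries in $\tfrac12\z$, which carries the short vector to $w_1+w_2+w_3+w_4$, and the only thing to check is that $\sigma_T(\tau(v_2))$ still has integral coordinates -- this follows from the hypothesis $a\equiv b\Mod{2}$ alone. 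If you wish to salvage your route, you must carry out the coset/congruence analysis separately for each $\bm{a}\in S$; this is feasible but essentially amounts to redoing the \ref{equivlem:3}$\Leftrightarrow$\ref{equivlem:4} computation at the ternary level with congruence conditions, and is considerably heavier than the paper's short integrality check.
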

\begin{proof}
	We first prove $\ref{equivlem:3}\Leftrightarrow\ref{equivlem:4}$. 
	Note that the class number of $L_{\bm{a}}$ is one for any $\bm{a}\in S$.
	Therefore, by Theorem \ref{localglobal} $[A,b,a]$ is represented by $L_{\bm{a}}$ if and only if $[A,b,a]$ is locally represented by $L_{\bm{a}}$. By Theorem 1 and 3 of \cite{OM1}, one may check, under the assumption on $a$ and $b$, that $[A,b,a]$ is locally represented by $L_{\bm{a}}$ if and only if $Aa-b^2\not\in E_{\bm{a}}$.
	
	Next, we prove $\ref{equivlem:1}\Leftrightarrow\ref{equivlem:2}$.
	Assume that there exist integers $x_1,x_2,x_3,x_4\in\z$ satisfying \eqref{eq1}. Define a linear map $\sigma : [A,b,a]\ra L_{\bm{a}}$ by 
	$$
	\sigma(v_1)=w_1+w_2+w_3+w_4 \quad \text{and}\quad \sigma(v_2)=\sum_{i=1}^4 x_iw_i.
	$$
	Then $\sigma : [A,b,a] \ra L$ is a representation since we have 
	\begin{equation*}
		\begin{cases}
			Q(\sigma(v_1))=A=Q(v_1),\\
			Q(\sigma(v_2))=a_1x_1^2+a_2x_2^2+a_3x_3^2+a_4x_4^2=a=Q(v_2),\\
			B(\sigma(v_1),\sigma(v_2))=a_1x_1+a_2x_2+a_3x_3+a_4x_4=b=B(v_1,v_2),
		\end{cases}
	\end{equation*}
	from \eqref{eq1}. This proves $\ref{equivlem:1} \Rightarrow \ref{equivlem:2}$, and $\ref{equivlem:2} \Rightarrow \ref{equivlem:1}$ can also be easily proved.
	
	Finally, we prove $\ref{equivlem:2}\Leftrightarrow\ref{equivlem:3}$. We need only to prove $\ref{equivlem:3}\Rightarrow\ref{equivlem:2}$. 
	Assume that there is a representation $\tau : [A,b,a] \ra L_{\bm{a}}$. 
	By changing the sign of $w_i$ for $1\le i \le 4$ or by interchanging $w_i$ and $w_j$ for $1\le i,j \le 4$ with $a_i=a_j$ if necessarily, we may assume that either $\tau(v_1)=w_1+w_2+w_3+w_4$ or 
	$$\tau(v_1)=
	\begin{cases}
		2w_1& \text{if } \bm{a}=(1,1,1,1),\\
		2w_1+w_2& \text{if } \bm{a}=(1,1,1,2),\\
		2w_1+w_3& \text{if } \bm{a}=(1,1,2,2).
	\end{cases}
	$$
	In the former case, we are done by taking $\sigma=\tau$. 
	To deal with the latter case, let $\tau(v_2)=\sum_{i=1}^4 y_iw_i$ ($y_i\in \z$). 
	
	First, we consider the case when $\bm{a}=(1,1,1,2)$ and $\tau(v_1)=2w_1+w_2$.
	Consider the $\q$-linear map $\sigma_T$ from $\q L_{\bm{a}}$ to itself defined by 
	$$
	\sigma_T(w_j)=\sum_{i=1}^4 t_{ij}w_i \text{ for each } 1\le j \le 4, \text{ where $T=(t_{ij})=\frac{1}{2}\cdot{\tiny \setlength\arraycolsep{2pt} \begin{pmatrix}
				0 & 2 & 0 & 0 \\
				1 & 0 & 1 & 2\\
				1 & 0 & 1 &-2\\
				1 & 0 &-1 & 0
		\end{pmatrix}}$}.
	$$
	Then $\sigma_{T}\in O(\q L_{\bm{a}})$. If we let $\sigma=\sigma_{T}\circ\tau$, then
	$$
	\sigma(v_1)=\sigma_{T}(2w_1+w_2)=w_1+w_2+w_3+w_4.
	$$
	On the other hand, since $\tau:[A,b,a]\ra L_{\bm{a}}$ is a representation, we have
	$$
	y_1^2 + y_2^2 + y_3^2 + 2y_4^2 = a \quad \text{ and } \quad 2y_1+y_2=b.
	$$
	Note that since $y_2^2 \equiv y_2\equiv b \equiv a \Mod{2}$, we have $y_1 \equiv y_1^2\equiv y_3^2 \equiv y_3 \Mod{2}$. 
	Therefore, $\sigma(v_2)=\sigma_{T}(\sum_{i=1}^4 y_iw_i) =:\sum_{i=1}^4 x_iw_i\in L_{\bm{a}}$, since 
	$$
	(x_1,x_2,x_3,x_4)=\left(y_2, \frac{y_1+y_3}{2}+y_4 , \frac{y_1+y_3}{2}-y_4, \frac{y_1-y_3}{2}\right)\in \z^4,
	$$
	which implies that $\sigma:[A,b,a]\ra L_{\bm{a}}$ is a representation that we want to find.
	
	For each of the remaining two cases, one may follow the argument similar to the above to show that $\sigma=\sigma_T\circ \tau$ is a representation that we desired, by taking 
	$$
	T=\frac{1}{2}\cdot{\tiny \setlength\arraycolsep{2pt} \begin{pmatrix}
			1 & 1 & 1 & 1 \\
			1 & 1 & -1 & -1\\
			1 & -1 & 1 &-1\\
			1 & -1 & -1 & 1
	\end{pmatrix}} 
	\quad \text{or} \quad 
	\frac{1}{2}\cdot{\tiny \setlength\arraycolsep{2pt} \begin{pmatrix}
			0 & 0 & 2 & 2 \\
			0 & 0 & 2 & -2\\
			1 & -1 & 0 & 0\\
			1 & 1 & 0 & 0
	\end{pmatrix}},
	$$
	according as $(\bm{a},\tau(v_1))=((1,1,1,1),2w_1)$ or $((1,1,2,2),2w_1+w_3)$.
\end{proof}

\begin{lem}\label{existb}
	Let $\bm{a}\in S$ and put $B_{\bm{a}}=2,2,4,$ and $7$ according as 
	$$
	\bm{a}=(1,1,1,1),(1,1,1,2),(1,1,2,2),\text{ and }(1,2,2,2).
	$$	
	Let $m\ge 5$ be an integer and let $I$ be a closed interval whose length is longer than or equal to $B_{\bm{a}}(m-2)$. 
	Then for any integer $N$, there exists an integer $b \in I$ such that
	\begin{equation}\label{condb}
		N\equiv b \Mod{m-2}\quad  \text{and} \quad Aa-b^2\not\in E_{\bm{a}},
	\end{equation}
	where $a=2\left( \frac{N-b}{m-2}\right)+b$, unless $m\equiv0\Mod{4}$ and $\bm{a}\in\{(1,1,1,1),(1,1,2,2)\}$.
\end{lem}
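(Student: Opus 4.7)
The plan is to exploit the fact that, since $I$ has length at least $B_{\bm{a}}(m-2)$, the set of $b \in I$ with $b \equiv N \Mod{m-2}$ contains at least $B_{\bm{a}} + 1$ consecutive terms of an arithmetic progression $b_k = b_0 + k(m-2)$ for $k = 0, 1, \ldots, B_{\bm{a}}$. For each $b_k$, the associated $a_k := 2(N - b_k)/(m - 2) + b_k = a_0 + k(m-4)$ is automatically an integer with $a_k \equiv b_k \Mod{2}$, and a direct substitution yields
$$
u_k := A a_k - b_k^2 = u_0 + k\bigl[A(m-4) - 2 b_0 (m-2)\bigr] - k^2 (m-2)^2,
$$
a quadratic polynomial in $k$. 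The problem thus reduces to showing that, outside the stated exception, at least one $u_k$ lies outside $E_{\bm{a}}$.

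I would argue case by case on $\bm{a} \in S$, $p$-adically with respect to the single prime governing $E_{\bm{a}}$. For $\bm{a} = (1,1,1,2)$, since $E_{\bm{a}} \subseteq 25\z$, it suffices to produce a $k \in \{0,1,2\}$ with $v_5(u_k) \le 1$: if $5 \nmid (m-2)$ then $b_0, b_1, b_2$ are pairwise distinct mod $5$, so at most one has $5 \dv b_k$, and any other gives $u_k \equiv -b_k^2 \not\equiv 0 \Mod{5}$; if $5 \dv (m-2)$, all $b_k$ share a residue mod $5$, and in the only dangerous subcase (common residue $0$) one uses $5 \nmid (m-4)$ to find a $k$ with $a_k \not\equiv 0 \Mod{5}$, giving $v_5(u_k) = 1$. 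For the remaining three vectors the relevant prime is $p = 2$: I would split into subcases according to $v_2(m-2)$ together with the residue of $(b_0, a_0)$ modulo $8$ (or modulo $16$ for $\bm{a} = (1,2,2,2)$), and control $u_k$ modulo a suitable power of $2$. The constants $B_{\bm{a}} \in \{2, 4, 7\}$ are calibrated to the $2$-adic depth required; for example, when $A \in \{4,6\}$ and some $b_k$ is odd one immediately gets $u_k$ odd with $u_k \not\equiv 7 \Mod{8}$, so $u_k \notin E_{\bm{a}}$.

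The critical place, and the source of the excluded case, is $m \equiv 0 \Mod{4}$ together with $\bm{a} \in \{(1,1,1,1), (1,1,2,2)\}$: then $m-2 \equiv 2 \Mod{4}$ and $m-4 \equiv 0 \Mod{4}$ freeze the parity of $b_k$ and the residue of $a_k$ modulo $4$, so that in an appropriately chosen residue class of $N$ every candidate $u_k$ ends up inside $E_{\bm{a}} = \{4^s(8t+7)\}$. This is precisely the obstruction that propagates to the infinite family of unrepresented integers in Theorem \ref{almostuniv} \ref{almostuniv:2}. In every non-exceptional configuration the case check is finite but intricate, and I expect the heaviest bookkeeping to fall on $\bm{a} = (1,2,2,2)$, whose $E_{\bm{a}}$ contains integers of odd $2$-adic valuation and therefore demands the largest shift budget $B_{\bm{a}} = 7$ and an exhaustive analysis of $u_k$ across residues of $(b_0, a_0)$ modulo $16$.
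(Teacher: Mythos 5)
Your overall strategy coincides with the paper's: shift $b$ through the residue class of $N$ via $b_k=b_0+k(m-2)$, note $a_k=a_0+k(m-4)$, and show that some $D_k=Aa_k-b_k^2$ escapes $E_{\bm{a}}$ by working modulo powers of the single relevant prime. The case $\bm{a}=(1,1,1,2)$ is essentially complete in your write-up (and in fact two shifts suffice there, exactly as in the paper). But as it stands the proposal has a genuine gap: for the three $2$-adic vectors you only state the easy subcase (some $b_k$ odd, giving $D_k\equiv 3$, $1$, or $5\Mod 8$) and then declare that the remaining configurations follow from a ``finite but intricate'' check. Those remaining configurations are precisely the content of the lemma and precisely what forces the constants $B_{\bm{a}}=4$ and $7$. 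For $\bm{a}=(1,1,2,2)$ with $N$ even and $m\equiv 2\Mod 4$ one must show that the four values $D_0,\dots,D_3$ realize all of $\{0,4,8,12\}$ modulo $16$ (which uses $m-4\equiv 2$, $m-2\equiv 0\Mod 4$); for $\bm{a}=(1,2,2,2)$ one must analyze $\Delta_i=D_i-D_0$ modulo $8$ and modulo $32$, splitting according to $m$ modulo $2$, $4$, and $8$ and the parity of $b_0$, to show seven shifts always produce $D_k\notin\{2^{2s}(16t+14)\}$. None of this is carried out, so the proposal does not yet establish the lemma, nor does it justify why the exceptional case is exactly $m\equiv 0\Mod 4$ with $\bm{a}\in\{(1,1,1,1),(1,1,2,2)\}$ rather than some larger excluded set.

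A smaller point: a closed interval of length $B_{\bm{a}}(m-2)$ is only guaranteed to contain $B_{\bm{a}}$ integers in a fixed residue class modulo $m-2$, not $B_{\bm{a}}+1$ as you claim; the paper accordingly uses the indices $k=0,\dots,B_{\bm{a}}-1$. Your $5$-adic argument quietly uses three shifts where only two are guaranteed (and, happily, two suffice), but any completion of the $2$-adic cases must respect this budget of exactly $B_{\bm{a}}$ shifts.
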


\begin{proof}
	For any integer $N$, let $b_0$ be the smallest integer in the interval $I$ such that $N\equiv b_0 \Mod{m-2}$. For an integer $k$, we define 
	$$
	b_k=b_0+k(m-2), \, a_k=2\left( \frac{N-b_k}{m-2}\right)+b_k,\text{ and } D_k=Aa_k-b_k^2.
	$$
	Note that $a_k=a_0+k(m-4)\in \z$ for any integer $k$. 
	We will show that 
	$$
	D_k\not\in E_{\bm{a}} \text{ for some } 0\le k \le B_{\bm{a}}-1.
	$$
	Then the lemma follows since $b=b_k$ satisfies \eqref{condb} and the interval $I$ contains $B_{\bm{a}}(m-2)$ consecutive integers.\\
	
	\noindent {\bf (Case 1)} $\bm{a}=(1,1,1,1)$ and $m\not\equiv 0\Mod{4}$ ($E_{\bm{a}}=\{2^{2s}(8t+7) \mid s\in\nn, t \in \z \}$).
	
	If $N\not\equiv 0\Mod{2}$ or $m\not\equiv0\Mod{2}$, then one may note that $b_k$ is an odd integer for some $k\in \{0,1\}$. Then $D_k=4a_k-b_k^2\equiv3\Mod{8}$, since $a_k\equiv b_k\equiv 1\Mod{2}$. Hence we have $D_k\not\in E_{\bm{a}}$. 
	
	Otherwise, $N\equiv 0\Mod{2}$ and $m\equiv2\Mod{4}$. Thus, $b_i\equiv a_i\equiv 0\Mod{2}$ for any integer $i$. Moreover, since $m-4\equiv 2\Mod{4}$, $a_k \equiv 2 \Mod{4}$ for some $k\in\{0,1\}$. Since $D_k\equiv 4$ or $8\Mod{16}$, we have $D_k\not\in E_{\bm{a}}$.\\
	
	\noindent {\bf (Case 2)} $\bm{a}=(1,1,1,2)$ ($E_{\bm{a}}=\{5^{2s+2}(5t\pm2)\mid s\in\nn, t \in \z \}$).
	
	If $(m-2) \not\equiv 0 \Mod{5}$, then $b_k\not\equiv0\Mod{5}$ for some $k\in \{0,1\}$.
	Note that $D_k=5a_k-b_k^2\equiv \pm 1 \Mod{5}$. Hence, $D_k\not\in E_{\bm{a}}$.
	
	Now assume that $(m-2) \equiv 0 \Mod{5}$. Note that $N\equiv b_0\Mod{5}$. If $N\not\equiv0\Mod{5}$, then $D_0\equiv\pm 1\Mod{5}$, hence $D_0\not\in E_{\bm{a}}$. If $5\mid N$, then $a_k\not\equiv 0\Mod{5}$ for some $k\in\{0,1\}$. Since $b_k\equiv b_0\equiv0\Mod{5}$, we have $5|D_k$ but $25\nmid D_k$, hence $D_k\not\in E_{\bm{a}}$.\\
	
	\noindent {\bf (Case 3)} $\bm{a}=(1,1,2,2)$ and $m\not\equiv 0\Mod{4}$ ($E_{\bm{a}}=\{2^{2s}(8t+7)\mid s\in\nn, t \in \z \}$).

	If $N\not\equiv 0\Mod{2}$ or $m\not\equiv0\Mod{2}$, then one may note that $b_k$ is an odd integer for some $k\in \{0,1\}$.  Then $D_k=6a_k-b_k^2\equiv 1 \text{ or } 5 \Mod{8}$, since $a_k\equiv b_k\equiv 1\Mod{2}$. Hence we have $D_k\not\in E_{\bm{a}}$. 
	
	Otherwise, $N\equiv 0\Mod{2}$ and $m\equiv2\Mod{4}$. So, $b_i\equiv a_i\equiv 0\Mod{2}$, hence $D_i\equiv 0\Mod{4}$  for any integer $i$.
	Note that $D_{i_1}\equiv D_{i_2}\Mod{16}$ if and only if 
	$$
	4(i_1-i_2)\left(3\left(\frac{m-4}{2}\right)-b_0\left(\frac{m-2}{2}\right)+(i_1+i_2)\left(\frac{m-2}{2}\right)^2\right) \equiv 0\Mod{16}.
	$$
	Since $m-4\equiv 2\Mod{4}$ and $m-2\equiv0\Mod{4}$, it is equivalent to $i_1\equiv i_2\Mod{4}$. Hence, we have
	$$
	\{D_i \text{ mod } 16 \mid i=0,1,2,3\}=\{0,4,8,12\}.
	$$
	Therefore, $D_k\not\in E_{\bm{a}}$ for some $k\in\{0,1,2,3\}$.\\
	
	\noindent {\bf (Case 4)} $\bm{a}=(1,2,2,2)$ ($E_{\bm{a}}=\{2^{2s}(16t+14)\mid s\in\nn, t \in \z \}$).

	We will show that $D_k\not\in E_{\bm{a}}$ for some integer $k$ with $0\le k \le 6$. 
	We may assume that $D_0 \in E_{\bm{a}}$, since otherwise we are done.
	For any integer $i$, define 
	$$
	\Delta_i=D_i-D_0=7(m-4)i-(b_0+i(m-2))^2+b_0^2.
	$$
	{\bf (4-1)} Assume that $m \equiv 1 \Mod{2}$. Note that $\Delta_i\equiv 0\Mod{2}$ for any $i$. Moreover, for integers $i_1,i_2$ with $i_1\equiv i_2\Mod{2}$, we have $\Delta_{i_1}\equiv\Delta_{i_2}\Mod{8}$ if and only if
	$$
	\left(\dfrac{i_1-i_2}{2}\right)(7(m-4)-(m-2)((i_1+i_2)(m-2)+2b_0))\equiv 0 \Mod{4}.
	$$
	Since both $m-4$ and $m-2$ are odd integers, it is equivalent to $i_1 \equiv i_2 \Mod{8}$. Hence, $\{\Delta_i \Mod{8} : i=0,2,4,6\}=\{0,2,4,6\}$, so 
	$\Delta_k\equiv 4 \Mod{8}$ for some $k\in\{0,2,4,6\}$.
	Therefore, one may show that $D_k=D_0+\Delta_k\not\in E_{\bm{a}}$.\\
	
	\noindent{\bf (4-2)} Assume that $m \equiv 2 \Mod{4}$. In this case, one may easily show that $\Delta_2\equiv 4 \Mod{8}$. Hence, $D_2=D_0+\Delta_2\not \in E_{\bm{a}}$.\\
	
	\noindent{\bf (4-3)} Assume that $m \equiv 4 \Mod{8}$. If $b_0\equiv 0 \Mod{2}$, then $\Delta_1 \equiv 4 \Mod{8}$, so $D_1=D_0+\Delta_1\not \in E_{\bm{a}}$. 
	Now assume that $b_0\equiv 1\Mod{2}$. Then $\Delta_i\equiv0\Mod{8}$ for any $i$. 
	Moreover, for integers $i_1,i_2$ with $i_1\equiv i_2\Mod{2}$, one may show that 
	$$
	\Delta_{i_1}\equiv \Delta_{i_2} \Mod{32} \Leftrightarrow i_1\equiv i_2 \Mod{8}.
	$$
	Hence, $\{\Delta_i \Mod{32} : i=0,2,4,6\}=\{0,8,16,24\}$. 
	
	If $D_0$ is of the form $4(16t+14)$, then $D_k=D_0+\Delta_k\not\in E_{\bm{a}}$ for some $k\in\{0,2,4,6\}$ with $\Delta_k\equiv 16 \Mod{32}$.
	Otherwise, for some $k\in\{0,2,4,6\}$ with $\Delta_k\equiv 8 \Mod{32}$, $D_k=D_0+\Delta_k\not\in E_{\bm{a}}$.\\
	
	\noindent{\bf (4-4)} The proof of the case when $m \equiv 0 \Mod{8}$ is quite similar to that of (4-3).
\end{proof}

We are now ready to prove Theorem \ref{almostuniv} \ref{almostuniv:1}.
\begin{proof}[Proof of Theorem \ref{almostuniv} {\rm (1)}]
	For each $\bm{a}\in S$, let $A=A_{\bm{a}}$ and $B=B_{\bm{a}}$, and let 
	$$
	I=I_{\bm{a}}=\left[\frac{A}{2}\left(\frac{m-4}{m-2}\right)- \frac{B}{2}(m-2),\frac{A}{2}\left(\frac{m-4}{m-2}\right)+ \frac{B}{2}(m-2)\right]
	$$ 
	be a closed interval whose length is $B(m-2)$, and let $N\ge C_{\bm{a}}(m-2)^3$ be an integer. Then by Lemma \ref{existb}, there exists an integer $b\in I$ such that
	$$
	N\equiv b \Mod{m-2}\quad  \text{and} \quad Aa-b^2\not\in E_{\bm{a}},
	$$
	where $a=2\left( \frac{N-b}{m-2}\right)+b$. 
	Note that $a\equiv b\Mod{2}$ and since $C_{\bm{a}}=\frac{B^2}{8A}$, we have
	$$
	\displaystyle\max_{b\in I} \left[\left(\frac{m-2}{2A}\right)b^2-\left(\frac{m-4}{2}\right)b\right]
	=
	\frac{B^2}{8A}(m-2)^3-\frac{A(m-4)^2}{8(m-2)}
	<
	C_{\bm{a}}(m-2)^3
	$$
	for any $m\ge 5$.
	Since $Aa-b^2>0$ if and only if $N>\left(\frac{m-2}{2A}\right)b^2-\left(\frac{m-4}{2}\right)b$, we have $Aa-b^2>0$. 
	Therefore, by Lemma \ref{equivlem}, there are integers $x_1,\ldots,x_4$ such that
	$$
	a_1x_1^2+a_2x_2^2+a_3x_3^2+a_4x_4^2=a \quad \text{and} \quad a_1x_1+a_2x_2+a_3x_3+a_4x_4=b.
	$$
	Therefore, by Lemma \ref{lemquatnec} $N=\frac{m-2}{2}(a-b)+b$ is represented by $P_{m,\bm{a}}$.
\end{proof}

Now, we are ready to prove Theorem \ref{almostuniv} \ref{almostuniv:2}.

\begin{proof}[Proof of Theorem \ref{almostuniv} {\rm (2)}]
	Let $\bm{a}$ be either $(1,1,1,1)$ or $(1,1,2,2)$, $A=A_{\bm{a}}$ and let $m=4l+4$ for some integer $l\ge 2$. 
	Let $N_0$ be a positive integer such that 
	$$
	N_0\nra P_{m,\bm{a}} \quad \text{and} \quad (2l+1)N_0+Al^2 \equiv 0 \Mod{4}.
	$$
	Note that such an integer exists; for example, one may take $N_0=10$ when $\bm{a}=(1,1,2,2)$ and $l\ge 2$ is odd, and $N_0=8$ otherwise.
	Moreover, we put 
	$$
	n=\text{ord}_{(2l+1)}(2)=\text{ord}_{(2l+1)}(l+1),
	$$
	where $\text{ord}_{b}(a)$ denotes the smallest positive integer $k$ such that $a^k \equiv 1 \pmod{b}$ for any positive integers $a$ and $b$ with $\text{gcd}(a,b)=1$. 
	
	We claim that for any $t\in\nn$, the integer
	$$
	N_t=N_{t,\bm{a}}:=\frac{4^{nt}((2l+1)N_0+Al^2)-Al^2}{2l+1}
	$$
	is not represented by $P_{m,\bm{a}}$. 
	Since $N_t\in\mathbb{N}$, the theorem follows directly from this claim.
	We will show that for $t\in\mathbb{N}$, $N_t\ra P_{m,\bm{a}}$ implies $N_{t-1}\ra P_{m,\bm{a}}$. Then since $N_0$ is not represented by $P_{m,\bm{a}}$, the claim follows. 
	Note that for any integer $N$, we have
	$$ 
	N=P_{m,\bm{a}}(x_1,x_2,x_3,x_4) \quad \Leftrightarrow \quad (2l+1)N+Al^2=\sum_{i=1}^{4}a_i\left((2l+1)x_i -l \right)^2.
	$$
	Assume that $N_t=P_{m,\bm{a}}(x_1,x_2,x_3,x_4)$ for some $x_1,x_2,x_3,x_4\in\z$.
	Then we have
	$$
	4^{n}((2l+1)N_{t-1}+Al^2)=4^{nt}((2l+1)N_0+Al^2) = \sum_{i=1}^{4}a_i\left((2l+1)x_i -l \right)^2\text{.}
	$$
	Since the left hand side is a multiple of $16$, we have $(2l+1)x_i-l \equiv 0 \Mod{2}$ for any $1\le i \le 4$. Since $\left( (2l+1)x_i-l \right) /2 \equiv -l(l+1) \pmod{2l+1}$, there exist integers $y_1,y_2,y_3,y_4\in\z$ such that 
	$$
	4^{n-1}((2l+1)N_{t-1}+Al^2) = \sum_{i=1}^{4}a_i \left((2l+1)y_i -l(l+1) \right)^2\text{.}
	$$
	Applying similar arguments recursively, we have 
	$$
	(2l+1)N_{t-1}+Al^2 = \sum_{i=1}^{4} a_i\left((2l+1)z_i -l(l+1)^n \right)^2
	$$
	for some integers $z_1,z_2,z_3,z_4\in\z$.
	Since $(2l+1)z_i -l(l+1)^n\equiv -l \Mod{2l+1}$,
	$$
	(2l+1)N_{t-1}+Al^2=\sum_{i=1}^{4} a_i \left((2l+1)z'_i -l \right)^2\text{.}
	$$
	for some integers $z_1',z_2',z_3',z_4'\in\z$, and so $N_{t-1}\ra P_{m,\bm{a}}$.
	This proves the claim, hence so does the theorem.
\end{proof}

\end{document}